\theoremstyle{plain}
\newtheorem{thm}{Theorem}[section]
\newtheorem{theorem}[thm]{Theorem}
\newtheorem*{theoremA}{Theorem A}
\newtheorem{conjecture}[thm]{Conjecture}
\theoremstyle{definition}
\newtheorem{definition}[thm]{Definition}
\newtheorem{remark}[thm]{Remark}
\newtheorem{thevarthm}[thm]{\varthmname}
\newenvironment{varthm*}[1]{\trivlist\item[]{\bf #1.}\it}{\endtrivlist}
\renewcommand\geq{\geqslant}
\renewcommand\leq{\leqslant}
\newcommand\be{\begin{eqnarray*}}
\newcommand\ee{\end{eqnarray*}}
\newcommand\newop[2]{\def#1{\mathop{\rm #2}\nolimits}}
\newop\log{log}
\newop\ord{ord}
\newop\Gal{Gal}
\newop\SL{SL}
\newop\Bl{Bl}
\newop\mult{mult}
\newop\mass{mass}
\newop\div{div}
\newop\codim{codim}
\newop\sing{sing}
\newop\vdim{vdim}
\newop\edim{edim}
\newop\Ass{Ass}
\newop\size{size}
\newop\reg{reg}
\newop\satdeg{satdeg}
\newop\supp{supp}
\newop\Neg{Neg}
\newop\Nef{Nef}
\newop\Nefh{Nef_H}
\newop\Eff{Eff}
\newop\Zar{Zar}
\newop\MB{MB}
\newop\MBxC{MB\mathit{(x,C)}}
\newop\NnB{NnB}
\newop\Bigg{Big}
\newop\Effbar{\overline{\Eff}}
\def\keywordname{{\bfseries Keywords}}%
\def\keywords#1{\par\addvspace\medskipamount{\rightskip=0pt plus1cm
\def\and{\ifhmode\unskip\nobreak\fi\ $\cdot$
}\noindent\keywordname\enspace\ignorespaces#1\par}}
\def\subclassname{{\bfseries Mathematics Subject Classification
(2000)}\enspace}
\def\subclass#1{\par\addvspace\medskipamount{\rightskip=0pt plus1cm
\def\and{\ifhmode\unskip\nobreak\fi\ $\cdot$
}\noindent\subclassname\ignorespaces#1\par}}
\begin{document}
\title{Harbourne constants and conic configurations on the projective plane}
\author{Piotr Pokora, Halszka Tutaj-Gasi\'nska}

\date{\today}
\maketitle
\thispagestyle{empty}
\begin{abstract}
In this note we exhibit the so-called Harbourne constants which capture and measure the bounded negativity on various birational models of an algebraic
surface. We show an estimation for Harbourne constants for conic configurations on the complex projective plane with transversal intersection points.
\keywords{conic configurations, Hirzebruch-type inequality, blow-ups, negative curves, bounded negativity conjecture}
\subclass{14C20}
\end{abstract}

\section{Introduction}

In this note we would like to focus on Harbourne constants in the context of the bounded negativity conjecture for smooth projective surfaces.
\begin{definition}
Let $X$ be a smooth projective surface. We say that $X$ has the \emph{bounded negativity property} if there exists an integer $b(X)$ such that for \emph{every reduced curve} $C \subset X$ one has the bound
$$C^{2} \geq -b(X).$$
\end{definition}
The bounded negativity conjecture (BNC for short) is one of the
most intriguing problem in the theory of projective surfaces and
can be formulated as follows.
\begin{conjecture}[BNC]
An arbitrary smooth complex projective surface has the bounded negativity property.
\end{conjecture}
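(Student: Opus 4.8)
Before sketching a strategy it is worth stating plainly that this is one of the central open problems of surface theory, so what follows is a map of the available lines of attack together with the point at which each of them stalls, rather than a genuine proof. The natural aim is to attach to each smooth projective surface $X$ a single numerical invariant dominating $-C^2$ over all reduced curves $C$. The reducedness hypothesis cannot be dropped: for any fixed curve with $C^2<0$ the multiples $mC$ satisfy $(mC)^2 = m^2 C^2 \to -\infty$. The first reduction, standard in this circle of ideas, is to irreducible curves, where adjunction gives $C^2 = 2p_a(C) - 2 - K_X\cdot C$; since $p_a(C)\geq 0$, bounding $C^2$ from below is equivalent to bounding $K_X\cdot C$ from above, so the whole question becomes one of controlling the intersection of the negative curves with the canonical class.

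The second step is to locate $K_X$ relative to the cone of curves. Whenever the effective cone is rational polyhedral --- for instance when $X$ carries a finitely generated Cox ring, as happens for del Pezzo, K3, Enriques and abelian surfaces --- the extremal rays are spanned by finitely many negative curves, $K_X\cdot C$ is bounded on them, and the bound follows at once. The tools specific to this paper, namely Harbourne constants together with Hirzebruch-type inequalities, are exactly the quantitative device one would deploy when $X$ is a blow-up of $\P^2$ along a configuration of curves: there the negativity is concentrated on the exceptional locus and on the strict transforms of the configuration, and a Hirzebruch inequality converts the combinatorial data of the configuration (the numbers of intersection points and their multiplicities) into an effective lower bound for $C^2$.

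The genuine obstacle, and the reason the conjecture is still open over $\C$, is that for a general surface of nonnegative Kodaira dimension the cone of curves need not be polyhedral, and nothing known rules out a sequence of irreducible curves along which $K_X\cdot C\to+\infty$ while $C^2\to-\infty$. That this is not a mere gap in technique is shown by positive characteristic, where the analogous statement is \emph{false}: on the product $C\times C$ of a smooth curve of genus $\geq 2$ the graphs of the iterated Frobenius endomorphism form irreducible curves whose self-intersections tend to $-\infty$. Any proof valid over $\C$ must therefore rest on an ingredient that genuinely fails in characteristic $p$, such as Hodge theory, the Bogomolov--Miyaoka--Yau inequality, or the Hirzebruch inequalities exploited here. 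Lacking such an input that applies to arbitrary surfaces, the honest conclusion is that the conjecture cannot be settled in this generality; the realistic contribution, and the one this paper makes, is to quantify negativity through Harbourne constants and to confirm the expected bounds on the explicit configurations where the relevant cones can be understood.
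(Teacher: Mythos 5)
The statement you were asked to prove is the Bounded Negativity Conjecture itself, which the paper states as an open conjecture and never proves --- there is no proof in the paper to compare against, and your refusal to manufacture one, explaining instead the known partial results and the obstruction, is the correct response and matches the paper's own framing (known special cases, failure in positive characteristic via Frobenius, Harbourne constants and Hirzebruch-type inequalities as the quantitative tool on blow-ups of $\mathbb{P}^2$). One small correction to your survey: the known cases such as K3, Enriques and abelian surfaces follow from adjunction together with $K_X$ being numerically trivial or $-K_X$ being $\mathbb{Q}$-effective (so $K_X\cdot C$ is bounded above on irreducible curves), not from rational polyhedrality of the effective cone or finite generation of the Cox ring, both of which can fail for K3 surfaces with infinitely many $(-2)$-curves.
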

It is not difficult to see that the BNC is not true in the case of positive characteristic (see for instance \cite[Remark I.2.2.]{Harbourne1}) thus through this paper \emph{we will work over the complex numbers}.

Some surfaces are known to have the bounded negativity property,
for instance those with the $\mathbb{Q}$-effective anticanonical divisor,
$K3$ surfaces, Enriques surfaces and abelian surfaces -- see
\cite{Duke, Harbourne1} for detailed explanations. However, it is
not clear when the BNC holds if one blows up points of those
surfaces. In particular, it is not know whether the blow-up of
$\mathbb{P}^2$ in ten general points has bounded negativity
property.

Let us now recall some recent results which outline a significant progress around the BNC. In \cite{BdRHHLPSz} the authors show the following theorem which is also the very first explicit estimation of self-intersection numbers for certain classes of curves.
\begin{theorem}
Let $f: X_{s} \rightarrow \mathbb{P}^2$ be the blow-up of
$\mathbb{P}^{2}$ in $s$ mutually distinct points. Let $\mathcal{L}
\subset \mathbb{P}^{2}$ be a line configuration on
$\mathbb{P}^{2}$. Denote by $L$ the strict transform of
$\mathcal{L}$. Then $L^{2} \geq -4\cdot s$.
\end{theorem}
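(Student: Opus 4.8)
The plan is to turn $L^2$ into a statement about the combinatorics of the arrangement and then feed that statement into Hirzebruch's inequality for complex line arrangements. Write $d=|\mathcal{L}|$ for the number of lines, and let $p_1,\dots,p_s$ be the blown-up points with $m_i=\mult_{p_i}\mathcal{L}$, the number of lines of $\mathcal{L}$ passing through $p_i$. Since $L=f^*\mathcal{L}-\sum_{i=1}^s m_iE_i$, the projection formula gives
$$L^2=d^2-\sum_{i=1}^s m_i^2,$$
so the claim $L^2\ge -4s$ is equivalent to $\sum_{i=1}^s(m_i^2-4)\le d^2$.

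First I would observe that only the points of high multiplicity can do harm: a point with $m_i\le 2$ contributes $m_i^2-4\le 0$, so discarding such points only decreases the left-hand side. Moreover every point with $m_i\ge 3$ is a singular point of $\mathcal{L}$, so if $t_k$ denotes the number of points of $\P^2$ lying on exactly $k$ lines of $\mathcal{L}$, then the blown-up points of multiplicity $\ge 3$ form a subset of the $k$-fold points with $k\ge 3$. Since each term $k^2-4$ with $k\ge 3$ is positive, extending the sum to all such points only enlarges it, and it therefore suffices to prove the purely combinatorial estimate
$$\sum_{k\ge 3}(k^2-4)\,t_k\le d^2. \qquad (\star)$$

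To establish $(\star)$ I would combine two inputs. The first is the elementary count of pairs of lines, $\sum_{k\ge 2}\binom{k}{2}t_k=\binom{d}{2}$; writing $k^2-4=k(k-1)+(k-4)$ and using this count recasts the left-hand side of $(\star)$ as $d^2-d-2t_2-t_3+\sum_{k\ge 5}(k-4)t_k$, so that $(\star)$ reduces to $\sum_{k\ge 5}(k-4)t_k\le d+2t_2+t_3$. The second input is Hirzebruch's inequality $t_2+\tfrac34 t_3\ge d+\sum_{k\ge 5}(k-4)t_k$, valid for complex line arrangements with $t_d=t_{d-1}=0$, which yields $\sum_{k\ge 5}(k-4)t_k\le t_2+\tfrac34 t_3-d$; since trivially $t_2+\tfrac34 t_3-d\le d+2t_2+t_3$, the required bound follows at once.

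The main obstacle is that Hirzebruch's inequality fails for exactly the two degenerate shapes excluded by its hypothesis, a pencil ($t_d=1$) and a near-pencil ($t_{d-1}=1$), so these must be treated by hand. In both cases $\mathcal{L}$ has a single point of multiplicity $\ge 3$, of multiplicity $d$ respectively $d-1$, whence $\sum_{k\ge 3}(k^2-4)t_k$ equals $d^2-4$ or $(d-1)^2-4=d^2-2d-3$, each at most $d^2$, so $(\star)$ holds directly; the few small values of $d$ for which Hirzebruch is vacuous are disposed of the same way, since then $\sum_{k\ge 3}(k^2-4)t_k$ is bounded by inspection. Assembling the generic case with these exceptions proves $(\star)$, hence $\sum_{i=1}^s(m_i^2-4)\le d^2$, and therefore $L^2\ge -4s$.
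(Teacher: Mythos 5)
Your proof is correct, and it follows essentially the same route as the paper's source for this statement: the paper does not reprove the theorem but cites \cite{BdRHHLPSz} and explicitly notes that the proof rests on Hirzebruch's inequality for line configurations together with the Harbourne-constant computation $L^{2}=d^{2}-\sum_{i}m_{i}^{2}$, which is exactly your reduction of $L^{2}\geq -4s$ to the combinatorial bound $\sum_{k\geq 3}(k^{2}-4)t_{k}\leq d^{2}$ via the pair count and $t_{2}+\tfrac{3}{4}t_{3}\geq d+\sum_{k\geq 5}(k-4)t_{k}$. Your separate treatment of the pencil, near-pencil, and small-$d$ cases excluded by Hirzebruch's hypotheses is the standard (and necessary) supplement, and it is carried out correctly.
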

In the above theorem the constant $4s$ gives a quite strict
estimation -- the most negative value of the self-intersection we
know is delivered by Wiman configuration (see the last section of the note). Moreover, there is no assumption that we blow up
the projective plane along singular loci of line configurations.
The proof of the above theorem is based on the Hirzebruch
inequality \cite{Hirzebruch} for line configurations and the
so-called Harbourne constants, which will be introduced in the
next section. In \cite{Xavier} the author studies the BNC in the
case of blow-ups of abelian surfaces and blow-ups of the
projective plane and using a certain family of elliptic curves
from \cite{Xavier1} he shows the following interesting result.
\begin{theorem}
Let $\mathcal{C}_{n}$ be a family of elliptic curves on the
complex projective plane which comes from \cite[Section
3]{Xavier1} and let $f: X_{s} \rightarrow
\mathbb{P}^{2}_{\mathbb{C}}$ be the blow-up along the set of $s$
singular points ${\rm Sing}(\mathcal{C}_{n})$. Denote by $C_{n}$
the strict transform of $\mathcal{C}_{n}$. Then one has
$${\rm lim}_{n \rightarrow \infty} \frac{C_{n}^{2}}{s} = -4.$$
\end{theorem}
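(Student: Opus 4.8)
The plan is to reduce the statement to a purely combinatorial computation governed by Bézout's theorem and by the behaviour of self-intersection under blow-up, and then to feed in the incidence data of Roulleau's family. Write $\mathcal{C}_n = C_{n,1} \cup \cdots \cup C_{n,k}$ as the union of its $k = k_n$ irreducible components, each of which is a smooth plane cubic (an elliptic curve embedded in $\P^2$), so that $\deg \mathcal{C}_n = 3k$. Since the curves meet transversally, at each point $p \in \mathrm{Sing}(\mathcal{C}_n)$ the multiplicity $m_p = \mult_p(\mathcal{C}_n)$ equals the number of components passing through $p$. Let $t_r$ denote the number of points of $\mathrm{Sing}(\mathcal{C}_n)$ lying on exactly $r$ of the cubics, so that $s = \sum_{r \ge 2} t_r$. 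The key structural input from \cite[Section 3]{Xavier1}, which I would recall explicitly, is that as $n \to \infty$ the family consists of $k_n \to \infty$ cubics whose singular points are asymptotically all ordinary quadruple points, i.e. $t_4$ dominates $\sum_{r \ne 4} t_r$.

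First I would record two elementary identities. By Bézout two distinct cubics meet in $9$ points counted with multiplicity; transversality makes these $9$ points distinct and reduced, and a point lying on exactly $r$ of the cubics accounts for $\binom{r}{2}$ of these pairwise intersections. Summing over all $\binom{k}{2}$ pairs yields
\[
\sum_{r \ge 2} \binom{r}{2} t_r = 9\binom{k}{2}, \qquad \text{equivalently} \qquad \sum_{r\ge 2} r(r-1)\, t_r = 9k(k-1).
\]
Next, since exactly $m_p$ branches pass through $p$, the strict transform under $f$ is $C_n = f^{*}\mathcal{C}_n - \sum_{p} m_p E_p$, and as the $E_p$ are mutually disjoint $(-1)$-curves orthogonal to $f^{*}\mathcal{C}_n$,
\[
C_n^2 = (3k)^2 - \sum_{p} m_p^2 = 9k^2 - \sum_{r \ge 2} r^2 t_r.
\]
Combining the two displays via $\sum_r r^2 t_r = \sum_r r(r-1)t_r + \sum_r r\, t_r$ eliminates the quadratic term and leaves the clean formula $C_n^2 = 9k - \sum_{r \ge 2} r\, t_r$.

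It then remains to evaluate the limit. Dividing by $s$ gives $C_n^2 / s = 9k/s - \big(\sum_r r\, t_r\big)/\big(\sum_r t_r\big)$, that is, $9k/s$ minus the average number of components through a singular point. Because the multiplicities are bounded while the right-hand side of the Bézout identity grows like $9k^2$, the number $s$ grows quadratically in $k$, so $9k/s \to 0$; and because the quadruple points dominate, the average multiplicity tends to $4$. In the clean homogeneous case (all points $4$-fold) the two identities give $s = \tfrac{3}{4}k(k-1)$ and $C_n^2/s = \tfrac{12}{k-1} - 4$, which tends to $-4$, and the general case follows once the contributions of the $t_r$ with $r \ne 4$ are shown to be negligible. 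The main obstacle is precisely this last point: I would need to extract from Roulleau's construction the exact counts $t_r = t_r(n)$ and $k_n$, verify that all intersections are genuinely transversal (so that $m_p$ equals the number of branches and the blow-up formula applies), and establish $\sum_{r \ne 4} r\, t_r = o(s)$, so that the non-quadruple points do not perturb the limiting value $-4$.
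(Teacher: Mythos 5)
First, a point of comparison: this statement is never proved in the paper at all --- it is quoted in the introduction as a known result of Roulleau (the family of elliptic curves comes from his other paper, cited as ``Xavier1'', which in fact does not even appear in the bibliography, and the asymptotic computation is carried out in \cite{Xavier}). So there is no in-paper proof to measure your argument against. Judged on its own merits, your proposal sets up the correct bookkeeping, and it is the same bookkeeping this paper uses later for conics: transversality gives the B\'ezout identity $\sum_{r\ge 2}\binom{r}{2}t_r=9\binom{k}{2}$, the blow-up formula gives $C_n^2=9k^2-\sum_r r^2t_r$, and combining the two yields $C_n^2=9k-\sum_r r\,t_r$, so that $C_n^2/s$ equals $9k/s$ minus the average multiplicity of a singular point. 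All of these computations are correct, as is the evaluation $C_n^2/s=\tfrac{12}{k-1}-4$ in the homogeneous $4$-fold case.

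Nevertheless the argument has a genuine gap, and you name it yourself: every conclusion hinges on structural facts about Roulleau's family --- that the number $k_n$ of cubics tends to infinity, that all intersections are transversal, that the multiplicities stay bounded, and above all that quadruple points dominate in the sense $\sum_{r\neq 4} r\,t_r=o(s)$ --- and none of these facts is established. Worse, the dominance of $t_4$ is reverse-engineered from the desired answer rather than extracted from the construction: by your own formula, bounded multiplicities and $k\to\infty$ \emph{force} the average multiplicity to tend to $4$ whenever the limit is $-4$, so postulating ``asymptotically all points are quadruple'' is essentially postulating the conclusion. The real content of the theorem is precisely the construction and incidence analysis of the family (Roulleau obtains the plane cubics from configurations of elliptic curves on an abelian surface, where the pairwise intersections and the incidence counts can be computed via the group structure), and that part is absent from your proposal. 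What you have actually proved is a conditional statement: \emph{if} the family consists of $k_n\to\infty$ smooth cubics meeting transversally with asymptotically only $4$-fold singular points, \emph{then} the limit is $-4$. That reduction is clean and would be the natural second half of a complete proof, but without the input data it does not prove the theorem.
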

At the end of this part, let us also recall that in \cite{Duke,
MT} the authors study the BNC for Shimura curves and they obtain
some surprising finiteness results for such curves.

In this paper we will consider the BNC for conic configurations (we assume that all conics are irreducible) \emph{having only transversal intersection points} on the complex projective plane and we show
for them the bounded negativity property. Namely, we prove the following:
\begin{theoremA}
Let $\mathcal{C}$ be a conic configuration of degree $2k$ having only transversal intersection points
 and such that there is no point through which all conics pass. Let $f: X_{s} \rightarrow \mathbb{P}^2$ be the blow-up along ${\rm Sing}(\mathcal{C}) = \{P_{1}, ..., P_{s}\}$.
  Denote by $C$ the strict transform of $\mathcal{C}$, then one has $C^{2} \geq -4.5s$.
\end{theoremA}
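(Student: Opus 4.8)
The plan is to adapt the strategy used for line configurations (Theorem 1.3 in the excerpt) to the conic setting, where the key input is a Hirzebruch-type inequality for conics rather than lines. First I would set up the combinatorics: write $t_r$ for the number of points in $\mathrm{Sing}(\mathcal{C})$ where exactly $r$ of the conics meet. Since all intersections are transversal, at such a point every pair of the $r$ conics through it crosses simply, so the local contribution to the total intersection count is $2\binom{r}{2}$ per point (each pair of conics meets there with multiplicity one, but two conics meet in at most four points by B\'ezout). Combining this with the global count that any two of the $k$ conics meet in at most $4$ points gives a relation of the form $\sum_r \binom{r}{2} t_r \le 4\binom{k}{2}$, which bounds how concentrated the incidences can be.

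The second step is the self-intersection computation on the blow-up $f:X_s\to\mathbb{P}^2$. Writing $\mathcal{C}$ for the degree-$2k$ configuration and $E_1,\dots,E_s$ for the exceptional divisors over $P_1,\dots,P_s$, the strict transform is $C = f^*\mathcal{C} - \sum_{i=1}^s m_i E_i$, where $m_i = r_i$ is the number of conics through $P_i$. Then
\begin{equation*}
C^2 = (2k)^2 - \sum_{i=1}^s m_i^2 = 4k^2 - \sum_r r^2\, t_r.
\end{equation*}
So the task reduces to bounding $\sum_r r^2 t_r$ from above in terms of $s=\sum_r t_r$ and $k$. The bound $C^2 \ge -4.5\,s$ will follow once we show $\sum_r r^2 t_r \le 4.5\, s + 4k^2$, i.e. $\sum_r (r^2 - 4.5)\, t_r \le 4k^2$.

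The crucial third step — and the main obstacle — is the Hirzebruch-type inequality for conics. For line arrangements Hirzebruch's inequality reads $t_2 + \tfrac{3}{4}t_3 \ge k + \sum_{r\ge 5}(r-4)t_r$; the analogous statement for conics (with transversal crossings) will have the general shape $\alpha\, t_2 + \dots \ge \beta k + \sum_{r \ge r_0}(r - c)\, t_r$ for suitable constants, and deriving or invoking it is where the real work lies. The standard route is to pass to the associated Kummer-type or orbifold covering, or to apply the logarithmic Bogomolov--Miyaoka--Yau inequality to the pair $(\mathbb{P}^2,\mathcal{C})$ or to its blow-up, using $c_1^2 \le 3c_2$ for the log-canonical sheaf $\Omega^1_X(\log C)$. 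The assumption that no point lies on all conics, together with irreducibility and transversality, is exactly what one needs to ensure the relevant divisor is \emph{big and nef} (or that the covering surface is of general type), so that the BMY inequality applies.

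Once the Hirzebruch-type inequality is in hand, the final step is purely arithmetic: combine it with the elementary B\'ezout relation $\sum_r \binom{r}{2} t_r \le 4\binom{k}{2}$ from the first step to eliminate the higher $t_r$ and the dependence on $k$, thereby producing the linear bound $\sum_r (r^2 - 4.5)\, t_r \le 4k^2$ in terms of $s$. The constant $4.5$ (rather than the $4$ obtained for lines) should emerge precisely from the coefficients in the conic inequality, reflecting that conics meet in up to four points while lines meet in only one; I expect the extra half to be the price of the weaker Hirzebruch bound available for conics. I anticipate the arithmetic to be routine convexity/linear-programming bookkeeping on the $t_r$, with all the genuine difficulty concentrated in establishing the correct conic analogue of Hirzebruch's inequality.
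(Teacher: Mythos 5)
Your setup coincides with the paper's: the self-intersection formula $C^2 = 4k^2 - \sum_r r^2 t_r$, the B\'ezout count (which for transversal configurations is in fact an \emph{equality}, $\sum_r \binom{r}{2} t_r = 4\binom{k}{2}$, which the paper uses to rewrite $C^2 = 4k^2 - f_2$ exactly as $4k - f_1$, where $f_i = \sum_{r\ge 2} r^i t_r$), and the recognition that everything hinges on a Hirzebruch-type inequality for conics. But that last item is precisely where your proposal stops being a proof: you never state the inequality, only that it ``will have the general shape $\alpha t_2 + \dots \ge \beta k + \sum_{r\ge r_0}(r-c)t_r$ for suitable constants,'' and you concede that deriving or invoking it is where the real work lies. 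Since the constant $4.5$ in the conclusion is determined entirely by those unspecified coefficients, the proposal contains no derivation of the claimed bound; the remark that ``the extra half'' should emerge from the conic inequality is a guess, not an argument.

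The paper fills this gap by quoting a theorem of Tang (reference [LT]): for $k \ge 3$ conics with transversal intersections and $t_k = 0$, the polynomial $F(x) = (2k+f_0)x^2 + 2(3k-f_1+2f_0)x + 4(f_0-t_2)$ satisfies $F(x) \ge 0$ for all integers $x$. Evaluating at $x=1$ gives $8k - 2f_1 + 9f_0 - 4t_2 \ge 0$, hence $8k - 2f_1 \ge 4t_2 - 9f_0 \ge -9f_0$, and combined with the B\'ezout equality this yields $C^2 = 4k - f_1 \ge -\tfrac{9}{2}f_0 = -4.5\,s$ in two lines; no further linear programming on the $t_r$ is needed. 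Note also that the hypothesis that no point lies on all conics (i.e. $t_k = 0$) enters as a hypothesis of Tang's theorem, not, as you suggest, as the condition making some log canonical divisor big and nef; making your BMY/covering sketch precise would essentially amount to reproving Tang's result, including the abelian covering construction, the nefness verification, and the Chern number computations, none of which your proposal carries out. So the architecture is right, but the quantitative heart of the proof is missing.
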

Moreover, we consider also the situation when all conics have
$2,3$ or $4$ points in common. Using then the standard Cremona
transformation, and passing to $\mathbb{P}^{1} \times
\mathbb{P}^{1}$ if necessary, we are able to give a bound on $C^2$
also in these cases. As an auxiliary result we prove a
Hirzebruch-type inequality for irreducible $(1,1)$-curve
configurations on $\mathbb{P}^{1} \times \mathbb{P}^{1}$. In
addition, we show that using the standard Cremona transform it is
possible to construct interesting conic configurations from the
well-known line configurations due to Klein and Wiman.

\section{Bounded Negativity viewed by Harbourne Constants}
We start with introducing the Harbourne constants \cite{BdRHHLPSz}.

\begin{definition}\label{def:H-constants}
   Let $X$ be a smooth projective surface and let
   $\mathcal{P}=\{ P_{1},\ldots,P_{s} \}$ be a set of mutually
   distinct $s \geq 1$ points in $X$. Then the \emph{local $H$--constant of $X$ at $\mathcal{P}$}
   is defined as
   \begin{equation}\label{eq:H-const for calp}
      H(X;\mathcal{P}):= \inf \frac{\left(f^*C-\sum_{i=1}^s \mult_{P_i}C\cdot E_i\right)^2}{s},
   \end{equation}
   where $f: Y \to X$ is the blow-up of $X$ at the set $\mathcal{P}$
   with exceptional divisors $E_{1},\ldots,E_s$ and the infimum is taken
   over all \emph{reduced} curves $C\subset X$.\\
   Similarly, we define the \emph{$s$--tuple $H$--constant of $X$}
   as
   $$H(X;s):=\inf_{\mathcal{P}}H(X;\mathcal{P}),$$
   where the infimum now is taken over all $s$--tuples of mutually
   distinct points in $X$. \\
   Finally, we define the \emph{global $H$--constant of $X$} as
   $$H(X):=\inf_{s \geq 1}H(X;s).$$
\end{definition}
   The relation between $H$--constants and the BNC can be expressed as follows.
   Suppose that $H(X)$ is a real number.
   Then for any $s \geq 1$ and any reduced curve $D$ on the blow-up of $X$
   in $s$ points, we have
   $$D^2 \geq sH(X).$$
   Hence BNC holds on all blow-ups of $X$ at $s$ mutually distinct points with the constant $b(X) = sH(X)$.
   On the other hand, even if $H(X)=-\infty$, the BNC might still be true.

   \section{The bounded negativity property for conic configurations}

  Given a configuration of conics on $\mathbb{P}^{2}_{\mathbb{C}}$ we denote by $t_{r}$
  the number of its $r$-fold  points -- i.e. points at which exactly $r$ conics of the configuration meet transversally. In the sequel we will use frequently
  that $\sum_{i} {\rm mult}_{P_{i}}(C) = \sum_{r \geq 2}rt_{r}$ and the notation $H_{\mathcal{C}}$ for conical
  H-constants, for instance $H_{C}(\mathbb{P}^{2}_{\mathbb{C}};{\rm Sing}(\mathcal{C}))$ denotes the conical
  local H-constant at the singular locus ${\rm Sing}(\mathcal{C})$, where the infimum is taken over reduced configurations of conics with transversal intersection points.
  Recall that we have the following combinatorial equality for configurations of conics
   \begin{equation}
   \label{combinatorial}
   4 { k \choose 2} = \sum_{r \geq 2} { r \choose 2} t_{r}.
   \end{equation}
   Moreover, for $i \in \{0,1,2\}$ we define $f_{i} = \sum_{r \geq 2} r^{i} t_{r}$.
   In order to get a lower bound for conical $H$-constant we will use the following result, which comes from \cite{LT}.

   \begin{theorem}
   \label{wielomian}
   Let $\mathcal{C}$ be a configuration of $k \geq 3$ conics with transversal intersections and $t_{k} = 0$.
   Let us define the polynomial
   $$F(x) = (2k+f_{0})x^2 + 2(3k-f_{1}+2f_{0})x + 4(f_{0}-t_{2}).$$
   Then $F(x) \geq 0$ for all $x \in \mathbb{Z}$.
   \end{theorem}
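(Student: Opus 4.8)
The plan is to realise $F(x)\ge 0$ as the non-negativity of an $x$-parametrised geometric quantity on a blow-up of $\mathbb{P}^2$, where $x$ is forced to be an integer because it is the degree of a line bundle (or, in Hirzebruch's approach, the index of an abelian cover). First I would pass to a simple normal crossings model: let $\pi\colon Y\to\mathbb{P}^2$ be the blow-up at those points of $\mathrm{Sing}(\mathcal{C})$ through which at least three conics pass, so that the strict transform of $C_1+\dots+C_k$ together with the exceptional divisors becomes an SNC divisor $\bar D$. The double points need not be blown up, since they are already nodes of the total curve; this step converts the transversal but possibly highly concurrent configuration into a divisor to which logarithmic/orbifold positivity results apply.

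On this model I would compute the relevant $x$-dependent invariant, namely a logarithmic Chern combination $\bar c_1^2$ versus $\bar c_2$, or equivalently the self-intersection (respectively the Euler characteristic) of a divisor class twisted by $\pi^*\mathcal{O}_{\mathbb{P}^2}(x)$. Because each conic is smooth of degree $2$ and genus $0$, and because the self-intersection of a strict transform drops by one at every blown-up point lying on it, all the incidence data collapse into $k$, $f_0$, $f_1$, $f_2$ and $t_2$; the total number of point--conic incidences over the triple-or-higher points is $f_1-2t_2$, while the quadratic contribution is governed by $f_2$. Since the twist enters the first Chern class affinely, the invariant is quadratic in $x$, with leading coefficient $2k+f_0$. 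Eliminating $f_2$ through the combinatorial identity \eqref{combinatorial}, i.e. $f_2-f_1=4k(k-1)$, should collapse the whole expression to precisely $F(x)$, and the sought non-negativity is then the corresponding positivity statement (a Bogomolov/BMY semistability bound, or a Hodge-index/effectivity bound) for the chosen class.

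It is instructive to see why the conclusion can hold only on $\mathbb{Z}$. The leading coefficient $2k+f_0$ is positive, so $F$ is an upward parabola, but its discriminant is in general strictly positive: for three conics in general position one has $t_2=12$, $f_0=12$, $f_1=24$, $f_2=48$, whence $F(x)=18\,x(x+1)$, which is negative on $(-1,0)$ yet vanishes at $x=0,-1$ and is positive at every other integer. This is exactly the fingerprint of an inequality whose parameter is an integral degree, and it shows that \eqref{combinatorial} by itself cannot force $F\ge 0$: a genuine geometric input is indispensable. At the purely elementary level the statement is equivalent to checking $F$ at the two integers flanking the vertex $-(3k-f_1+2f_0)/(2k+f_0)$, and the geometry is what guarantees that no integer lies strictly between the two real roots.

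The hard part will be the Chern-number bookkeeping together with pinning down the precise positivity engine with the correct sign. Concretely, at each $r$-fold point one must account for the exceptional curve, for the drop in self-intersection of the $r$ strict transforms meeting there, and for the lost mutual incidences, and then verify that the degree-$2$ (rather than degree-$1$) nature of the components feeds into the coefficients so that \eqref{combinatorial} telescopes the sum exactly into $(2k+f_0)x^2+2(3k-f_1+2f_0)x+4(f_0-t_2)$. Keeping the genus and intersection contributions consistent across the blow-up, confirming that the twist appears with leading coefficient exactly $2k+f_0$, and checking that the relevant class is positive for every integer $x$, is the delicate accounting that completes the proof.
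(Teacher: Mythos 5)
First, a point of reference: the paper does not prove Theorem~\ref{wielomian} at all --- it quotes it from Tang \cite{LT} --- so there is no in-paper proof to compare you against. The closest analogue inside the paper is the proof of Theorem~\ref{Hirz11}, which runs exactly the machine you sketch: blow up the points of multiplicity at least $3$, form Hirzebruch's abelian cover of exponent $n$ branched along the configuration, compute $c_1^2$ and $c_2$ of the minimal desingularization, verify $K'$ is nef/ample, and apply Miyaoka--Yau. So your overall strategy is the right family of arguments, and your two diagnostics are correct and valuable: the combinatorial identity (\ref{combinatorial}) alone cannot yield the claim, your computation $F(x)=18x(x+1)$ for three general conics (where $t_2=12$) is right, and it shows that the integrality of $x$ must enter in an essential way, since $F$ is negative at $x=-1/2$.

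The genuine gap sits exactly at the point your diagnostic exposes, and the proposal never closes it. The positivity engines you offer split into two kinds, and neither, as invoked, proves the statement. First, any mechanism that applies to a class depending affinely on $x$ on a \emph{fixed} surface and remains meaningful for real $x$ --- Hodge index for $\mathbb{R}$-divisors, effectivity bounds, log-Chern/BMY-type inequalities with real boundary weights --- would prove $F(x)\geq 0$ for all real $x$, which is false by your own example; so this branch of your plan cannot work, full stop. (Twisting by $\pi^*\mathcal{O}_{\mathbb{P}^2}(x)$ does not help Bogomolov-type arguments either: the discriminant is twist-invariant, so no free parameter $x$ survives.) Second, the only genuinely integer-sensitive mechanism you mention --- the abelian cover, which is in fact how \cite{LT} proceeds and how the paper proves Theorem~\ref{Hirz11} --- yields one Miyaoka--Yau inequality per admissible exponent $n\geq 2$ (in the paper, $K'$ is nef for $n=2$ and ample for $n\geq 3$, and MY is applied at $n=3$; this verification is also where the hypotheses $t_k=0$ and the lower bound on $k$ are consumed, hypotheses your sketch never uses). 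After whatever change of variables relates $x$ to $n$, this covers at most a half-line of integers: it does not by itself give the negative integers, nor the tight values $x=0,-1$ at which $F$ vanishes in your example, and you never even fix the change of variables that would make $x=1$ --- the single value the paper actually needs in its proof of the $-9/2$ bound --- correspond to an admissible cover. The decisive content of the proof is therefore precisely what is deferred in your write-up: the exact parametrization of the cover exponent, the Chern-number bookkeeping you leave as ``should collapse to $F(x)$,'' the nefness/Kodaira-dimension verification, and an argument covering the integers outside the range swept by the covers.
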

   Now we are ready to proof the first main result.
\begin{theorem} Let $\mathcal{C}$ be a conic configuration of degree $2k$ having transversal intersection points
 and $t_{k} = 0$. Let $f: X_{s} \rightarrow \mathbb{P}^2$ be the blow-up along ${\rm Sing}(\mathcal{C}) = \{P_{1}, ..., P_{s}\}$. Denote by $C$ the strict transform of $\mathcal{C}$. Then one has $C^{2} \geq -4.5s$.
\end{theorem}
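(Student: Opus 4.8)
The plan is to reduce everything to the combinatorial data $f_0,f_1,f_2$ of the configuration and then to feed the polynomial inequality of Theorem \ref{wielomian} a single, well-chosen integer.

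First I would write down $C^2$ explicitly. Since $\mathcal{C}$ has degree $2k$ and the conics meet transversally, the strict transform is $C = f^*\mathcal{C} - \sum_{i=1}^s \mult_{P_i}(\mathcal{C})\,E_i$, where $\mult_{P_i}(\mathcal{C}) = r_i$ whenever $P_i$ is an $r_i$-fold point. Using $(f^*\mathcal{C})^2 = (2k)^2 = 4k^2$, together with $f^*\mathcal{C}\cdot E_i = 0$, $E_i^2 = -1$ and $E_i\cdot E_j = 0$ for $i\ne j$, I obtain
$$C^2 = 4k^2 - \sum_{i=1}^s r_i^2 = 4k^2 - \sum_{r\ge 2} r^2 t_r = 4k^2 - f_2.$$
I would also record that the number of blown-up points is $s = \sum_{r\ge 2} t_r = f_0$.

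Second, I would use the combinatorial identity \eqref{combinatorial} to trade $f_2$ for $f_1$. Expanding $4\binom{k}{2} = \sum_{r\ge 2}\binom{r}{2}t_r$ gives $2k^2 - 2k = \tfrac12(f_2 - f_1)$, hence $f_2 = 4k^2 - 4k + f_1$ and therefore
$$C^2 = 4k^2 - f_2 = 4k - f_1.$$
So the whole problem collapses to producing the upper bound $f_1 \le 4k + \tfrac92 s$.

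Third, and this is the crux, I would apply Theorem \ref{wielomian} at the integer $x = 1$. Evaluating
$$F(1) = (2k+f_0) + 2(3k - f_1 + 2f_0) + 4(f_0 - t_2) = 8k + 9f_0 - 2f_1 - 4t_2 \ge 0$$
yields $f_1 \le 4k + \tfrac92 f_0 - 2t_2$. Since $t_2 \ge 0$ and $f_0 = s$, this is exactly $f_1 \le 4k + \tfrac92 s$, and combining with $C^2 = 4k - f_1$ gives $C^2 \ge 4k - (4k + \tfrac92 s) = -4.5\,s$. The computations are routine; the only genuine decision is the choice $x=1$, which is precisely the integer making the coefficient of $s$ come out to $9/2 = 4.5$ — arguing instead through the real discriminant of $F$, or evaluating at another point, would give a weaker or messier constant. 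I would finally note that Theorem \ref{wielomian} assumes $k\ge 3$, so the cases $k=1,2$ (where $\mathcal{C}$ has at most four transversal nodes and $C^2\ge 0$ is immediate) should be dispatched separately.
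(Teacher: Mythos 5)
Your proof is correct and follows essentially the same route as the paper: both reduce $C^2$ to $4k - f_1$ via the combinatorial identity \eqref{combinatorial} and then apply Theorem \ref{wielomian} at $x=1$, discarding the $-4t_2$ term. Your explicit treatment of $C^2$ as an intersection number and your remark on the small cases $k\le 2$ (which the paper dismisses by assuming $k\ge 3$) are harmless refinements of the same argument.
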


\begin{proof} To avoid trivialities we assume that $k \geq 3$. Our aim is to show that for a conic configuration with $t_{k} = 0$ one has
$$H_{C}(\mathbb{P}^2; {\rm Sing}(\mathcal{C})) = \frac{(2k)^2 - \sum_{r \geq 2} r^{2}t_{r}}{\sum_{r \geq 2} t_{r}} \geq -\frac{9}{2}.$$
Using combinatorial equality (\ref{combinatorial}) we obtain
$$ 4k^{2} - \sum_{r \geq 2} r^{2} t_{r} = 4k - \sum_{r \geq 2} r t_{r}.$$
This leads to
$$H_{C}(\mathbb{P}^2; {\rm Sing}(\mathcal{C})) = \frac{4k - \sum_{r\geq 2}rt_{r}}{ \sum_{r \geq 2}t_{r}}.$$
Now we are going to use Theorem \ref{wielomian}. For $x=1$ we obtain
$$8k - 2f_{1} - 4t_{2} + 9f_{0} \geq 0.$$
After simple manipulations we get the following sequence of inequalities
$$8k - 2f_{1} \geq -9f_{0} + 4t_{2} \geq -9f_{0},$$
and finally
$$H_{C}(\mathbb{P}^2; {\rm Sing}(\mathcal{C})) = \frac{4k - \sum_{r \geq 2}rt_{r} }{\sum_{r \geq 2}t_{r}} = \frac{4k - f_{1}}{f_{0}} \geq - \frac{9}{2},$$
which ends the proof.
\end{proof}
Now we want to show that the bounded negativity property holds for configurations of conics with $t_{k} \neq 0$. We exhibit three cases, namely:
\begin{itemize}
\item $t_{k} = 4$. This case corresponds to the one parameter family of conics passing through $4$ points and we have $H_{C}(\mathbb{P}^{2}; {\rm Sing}(\mathcal{C}))= 0$.
\item $t_{k} = 3$. We will use the standard Cremona transform based on these three $t_{k}$ points, let us denote them by $P_1,P_2,P_3$. Note that these
 points are not collinear since our conics are irreducible. In what follows we will denote points before and after the standard Cremona transform by the same letters,
 hoping that it does not lead to misunderstanding. After performing Cremona
 transform each conic passing through points $P_1,P_2,P_3$ and through other points $P_4,...,P_s$ goes to a line, which omits (the
 new) $P_1,P_2,P_3$,
 and passes through (the images of) $P_4,...,P_s$. Denote by small letters $f_0, f_1$ the constants for the considered configurations of conics and by capital
letters $F_0,F_1$ these constants for the resulting configurations of lines. Observe that  $F_0=f_0-3$ and $F_1=f_1-3k$. Thus
$$H_{C}(\mathbb{P}^2;  {\rm Sing}(\mathcal{C})) = \frac{4k-f_1}{f_0}=\frac{4k-3k-F_1}{F_0+3}=\frac{k-F_1}{F_0+3}.$$
By \cite[Theorem 3.3]{BdRHHLPSz} we know that $\frac{k-F_1}{F_0}>-4$. In the case $k-F_1\geq 0$ we have of course
$$\frac{k-F_1}{F_0+3}>-4,$$
and for  $k-F_1<0$ we have
$$\frac{k-F_1}{F_0+3}> \frac{k-F_1}{F_0}>-4,$$
so surely $$H_{C}(\mathbb{P}^2; {\rm Sing}(\mathcal{C}))>-\frac92.$$
\item $t_{k}=2$.
 Now we need to find an estimation of the following $H$-constant
 $$H_{C}(\mathbb{P}^2; {\rm Sing}(\mathcal{C})) = \frac{4k - \sum_{r \geq 2}rt_{r} }{\sum_{r \geq 2}t_{r}} \geq \frac{2k - \sum_{r \geq 2, r \neq k} rt_{r}}{\sum_{r \geq 2}t_{r}-2}.$$
 (The last inequality holds if $4k - \sum_{r \geq 2}rt_{r} <0$, but if $4k - \sum_{r \geq 2}rt_{r} \geq 0$ we are done anyway).
  Let us denote these two $k$-fold points by $P,Q$. Consider the blow-up along $P,Q$ and denote by $l_{P,Q}$ the line passing through these two points:  $l_{P,Q}$ is a $(-1)$-curve.
  Now we blow down $l_{P,Q}$, which leads us to $\mathbb{P}^{1} \times \mathbb{P}^{1}$. Recall that there is a one-to-one correspondence between linear series of conics
  on $\mathbb{P}^2$ and linear series of $(1,1)$-curves on $\mathbb{P}^{1} \times \mathbb{P}^{1}$ and moreover since all $k$ conics are irreducible thus after
  passing to $\mathbb{P}^{1} \times \mathbb{P}^{1}$ the corresponding $(1,1)$-curves are also irreducible.
  The above mentioned correspondence tells us that it is enough to find a Hirzebruch type inequality for configurations of $(1,1)$-curves without $k$-fold points.
  Using Theorem \ref{Hirz11} we obtain

  $$\frac{2k-f_1}{f_{0}-2}\geq \frac{-34+2t_{2}+f_{1}}{f_{0}-2} - 8.$$
\end{itemize}
The last remaining case is $t_{k}=1$ and at this moment this is an open problem.
There are some unexpected obstacles which did not allow us to deal with this case. However, we hope to come back to this problem soon.

Now we show a Hirzebruch-type inequality for irreducible $(1,1)$-curves on $\mathbb{P}^{1} \times \mathbb{P}^{1}$.
\begin{theorem}
\label{Hirz11}
Let $\mathcal{C}_{k}$ be a configurations of $k \geq 4$ irreducible $(1,1)$-curves on $\mathbb{P}^{1} \times \mathbb{P}^{1}$ such that $t_{k} = 0$. Then one has
$$9 + k + t_{2} + t_{3} \geq \sum_{r \geq 5} (k-4)t_{r}.$$
\end{theorem}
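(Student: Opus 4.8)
The plan is to obtain the inequality from a logarithmic Bogomolov--Miyaoka--Yau inequality on $\mathbb{P}^{1}\times\mathbb{P}^{1}$, in direct analogy with Hirzebruch's inequality for line arrangements and with Theorem~\ref{wielomian} for conics. Put $X=\mathbb{P}^{1}\times\mathbb{P}^{1}$ and let $F_{1},F_{2}$ be the rulings, so that $K_{X}=-2F_{1}-2F_{2}$, $K_{X}^{2}=8$ and $e(X)=4$. Each member $C_{j}$ lies in $|F_{1}+F_{2}|$, hence $C_{j}^{2}=2$ and, by adjunction, $C_{j}$ is a smooth rational curve; two distinct members meet in $C_{i}\cdot C_{j}=2$ points. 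This last fact gives the combinatorial identity $\sum_{r\ge 2}\binom{r}{2}t_{r}=2\binom{k}{2}$, i.e. $f_{2}-f_{1}=2k(k-1)$, the analogue of \eqref{combinatorial}.

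First I would form the log resolution $g\colon Y\to X$ by blowing up the points of multiplicity $\ge 3$, so that $\tilde{C}+\sum_{i}E_{i}$ is a simple normal crossing divisor (the double points are already nodes, hence need not be resolved). The invariants feeding the inequality are the logarithmic Chern numbers of the complement $X\setminus\bigcup_{j}C_{j}=Y\setminus(\tilde{C}+\sum_{i}E_{i})$. Its Euler number is $e(X\setminus\bigcup_{j}C_{j})=4-2k+f_{1}-f_{0}$, computed from $e(\bigcup_{j}C_{j})=2k-(f_{1}-f_{0})$, while the relevant first Chern data come from $K_{X}+\sum_{j}C_{j}=(k-2)(F_{1}+F_{2})$, of square $2(k-2)^{2}$.

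The essential point is that the naive inequality $\overline{c}_{1}^{2}\le 3\overline{c}_{2}$ taken with the reduced boundary only yields a vacuous estimate here; the correct input is the orbifold Bogomolov--Miyaoka--Yau inequality of Langer -- the same engine behind Theorem~\ref{wielomian} -- in which every multiple point carries its optimal orbifold weight. Equivalently one constructs the abelian (Kummer) cover of $X$ of a suitable exponent branched along $\bigcup_{j}C_{j}$, resolves it to a smooth surface $W$, and applies the Miyaoka--Yau inequality $c_{1}^{2}(W)\le 3c_{2}(W)$. Locally an ordinary $r$-fold transversal point on $X$ is indistinguishable from $r$ concurrent lines, so its contribution, with the optimal weight, is supported on $r\ge 5$ and carries coefficient $r-4$; this is the origin of the cut-off at $r=5$ and of the absence of $t_{4}$. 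Combining these local contributions with the global data $K_{X}^{2}=8$, $e(X)=4$, $\sum_{j}C_{j}=k(F_{1}+F_{2})$ and the identity $f_{2}-f_{1}=2k(k-1)$, and simplifying, produces the asserted inequality.

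The main obstacle is to check the hypotheses of the orbifold BMY inequality, namely that the associated orbifold (equivalently the cover $W$) is of general type, so that Miyaoka--Yau may be invoked. This is exactly where $k\ge 4$ and $t_{k}=0$ are used: they rule out the degenerate pencil-type configurations -- in particular a point through which all the curves pass -- for which $K_{X}+\sum_{j}C_{j}$ fails to be big and the inequality breaks down. The secondary, purely computational difficulty is the bookkeeping of the local orbifold Euler-number corrections (or, in the covering picture, of $c_{1}^{2}(W)$ and $c_{2}(W)$) at the multiple points, together with the choice of weight/exponent that makes the estimate sharp; it is in this step that each multiplicity, bounded by $r\le k-1$, enters the coefficient of $t_{r}$ in the final bound.
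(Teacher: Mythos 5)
Your plan, in its covering formulation, is exactly the paper's proof: the paper constructs the Hirzebruch-style abelian cover of order $n^{k-1}$ branched along the configuration, resolves its singularities, and applies Miyaoka--Yau $c_1^2\le 3c_2$ at the specific exponent $n=3$, following \cite{LT}. Your preliminary setup is also sound: $K_X^2=8$, $e(X)=4$, each irreducible $(1,1)$-curve is smooth rational with $C_i\cdot C_j=2$, hence $f_2-f_1=2k(k-1)$; the Euler number of the complement is as you say; and your observation that the reduced-boundary log-BMY inequality is vacuous here is correct (with boundary $\widetilde{C}+\sum_i E_i$ it collapses, after substituting $f_2=f_1+2k(k-1)$, to $f_0+4\ge 0$). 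One attribution slip: Theorem \ref{wielomian} also rests on the covering construction of \cite{LT}, not on Langer's orbifold inequality.

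The genuine gap is that the core of the argument is asserted rather than performed. Your key sentence --- that an $r$-fold point is ``locally indistinguishable from $r$ concurrent lines, so its contribution, with the optimal weight, is supported on $r\ge 5$ and carries coefficient $r-4$'' --- is not a derivation and cannot become one: the coefficients in Hirzebruch-type inequalities are not local data, they emerge only after combining the local contributions with the global Chern numbers of the cover, and those global data differ from the line case (Hirzebruch's inequality for $d$ lines has left-hand side $d+t_2+\frac{3}{4}t_3$, with coefficient $\frac{3}{4}$ on $t_3$, whereas here one gets $9+k+t_2+t_3$). In effect you have assumed the shape of the conclusion. What remains owed is precisely the paper's computation: $e(Y)$ of the resolved cover (strata contributions of orders $n^{k-1},n^{k-2},n^{k-3}$, plus the Hurwitz-formula curves $e(C_p)=n^{r-1}(2-r)+rn^{r-2}$ over the singular points), $K_Y^2=n^{k-1}(K')^2$ via the blow-up at the points of multiplicity at least $3$, the verification that $K'$ is nef for $n=2$ and ample for $n\ge 3$ --- this, not bigness of $K_X+\sum_j C_j$, is where $k\ge 4$ and $t_k=0$ enter, since $K_X+\sum_j C_j=(k-2)(F_1+F_2)$ is ample for every configuration with $k\ge 3$, so the reason you give cannot be the right one --- and finally the evaluation of $c_1^2\le 3c_2$ at $n=3$, which after simplification yields $9+k-t_2\ge\sum_{r\ge 2}(r-4)t_r$ and hence the claim. (Your coefficient $r-4$ does match what the paper's proof actually produces; the $k-4$ in the theorem's display is evidently a typo.)
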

\begin{proof}
 We will mimic the argumentation due to Hirzebruch -- let us sketch the main idea of his construction. For a configuration of irreducible $(1,1)$-curves on $\mathbb{P}^{1} \times \mathbb{P}^{1}$
 we construct an abelian covering $X$ of order $n^{k-1}$ branched along this configuration, which is singular at certain points - these singular points exactly come from
 singular points of $(1,1)$-curves configurations with multiplicities $m(P_{i}) \geq 3$. After the minimal desingularization we obtain a minimal smooth projective surface $Y$
 (minimal in the sense that there is no $(-1)$-curve) with non-negative Kodaira dimension. Over a singular point $p$ of $X$ we have in $Y$ a curve $C_{p}$ with
$$e(C_{p})=2-2g(C_{p})=n^{r-1}(2-r)+rn^{r-2},$$
by Hurwitz formula, cf (2.1) in \cite{LT}.
 Then we will use the well-known Miyaoka-Yau inequality (see for instance \cite{Miyaoka}) which tells us that $c_{1}(Y)^2 \leq 3c_{2}(Y)$ -- this leads us to the desired result.
 Since the construction of this cover is well-studied (see for instance \cite{Xavier}), thus we focus only on computing $c_{1}$ and $c_{2}$ on the minimal desingularization $Y$.
  Firstly, we compute $c_{2}$. Observe  that
  $$c_{2}(Y) = e(Y) = e(X \setminus {\rm Sing}(X)) + \sum_{r\geq 3}  n^{k-1-r}t_{r}e(C_{p}).$$
  Then
    $$e(X \setminus {\rm Sing(X)}) = n^{k-1}e(\mathbb{P}^{1} \times \mathbb{P}^{1} \setminus \mathcal{C}'_{k}) + n^{k-2}e(\mathcal{C}'_{k} \setminus {\rm Sing}(\mathcal{C}'_{k})) + n^{k-3}t_{2}.$$
It is elementary to see that
    $$e(\mathbb{P}^{1} \times \mathbb{P}^{1} \setminus \mathcal{C}'_{k}) = 4 - e(\mathcal{C}'_{k}),$$
    $$e(\mathcal{C}'_{k}) = k e(\mathbb{P}^1) - \sum (r-1)t_{r} = 2k - \sum (r-1)t_{r},$$
and
$$\sum_{r\geq 3}  n^{k-1-r}t_{r}e(C_{p}) = \sum_{r \geq 3} (n^{r-1}(2-r) + rn^{r-2})t_{r}n^{k-1-r}.$$
Finally
    $$e(Y) = n^{k-1}(4 - 2k + \sum (r-1)t_{r}) + n^{k-2}(2k-\sum rt_{r} + 2\sum t_{r} - \sum rt_{r})+n^{k-3}(t_{2} + \sum_{r \geq 3} rt_{r}).$$
Using the notation of $f_{i}$ we have
    $$\frac{1}{n^{k-3}}e(Y) = n^2(4-2k+ f_{1} - f_{0}) + 2n(k + f_{0} - f_{1}) + f_{1} -t_{2}.$$
Now we compute $c_{1}(Y)^2 = K_{Y}^2$. Here, as it is done in
\cite{LT}, we consider  the blow-up $\tau:
\widetilde{\mathbb{P}^{1} \times \mathbb{P}^{1} }\to \mathbb{P}^{1} \times \mathbb{P}^{1}$ in points $p$ of the
configurations with $m(p) \geq 3$ (in points such that at least three $(1,1)$-curves meet). Using the result \cite[Lemma~2.3]{LT} we have
$$K' = \tau^{*}(K_{\mathbb{P}^{1} \times \mathbb{P}^1}) + \sum_{m(p) \geq 3} E_{p} + \frac{n-1}{n} \bigg(\sum_{m(p)\geq3} E_{p} + \tau^{*}(\mathcal{C}'_{k}) - \sum_{m(p) \geq 3} r E_{p}\bigg)$$ and
$$K_{Y}^2 = n^{k-1}(K')^2,$$
where $E_p$ are the exceptional divisors of the blow-up $\tau$.
 Hence
$$K' = \tau^{*}(K_{\mathbb{P}^{1} \times \mathbb{P}^1}) + \frac{n-1}{n} \tau^{*}(\mathcal{C}'_{k}) + \sum_{r \geq 3} E_{r} \bigg(1 + \frac{n-1}{n}(1-r)\bigg).$$
Another elementary computations lead us to the following equality
$$ n^{2}K'^{2} = 2k^2 - \sum_{r \geq 3}t_{r}(1-r)^{2} + n\bigg(8k - 4k^{2} - \sum_{r \geq 3}t_{r}(-2r^{2}+6r-4)\bigg) + n^{2}\bigg(8 - 8k + 2k^{2} - \sum_{r\geq 3}t_{r}(r-2)^{2}\bigg).$$
Assume for now that $n \geq 2$ and $k \geq 4$. Then we need to
show that $Y$ has non-negative Kodaira dimension. Indeed, using the
same argumentation as in \cite[Section 2.3]{LT} one can see that
$K'$ is nef for $n=2$ and ample for $n \geq 3$, thus we can
apply the Miyaoka-Yau inequality for $n = 3$ and obtain (after some tedious and long computations)
$$9 + k - t_{2} \geq \sum_{r \geq 2}(r-4)t_{r}$$

and finally
$$9 + k + t_{2} + t_{3} \geq \sum_{r \geq 5} (r-4)t_{r}.$$

\end{proof}
\section{Configurations of curves and the standard Cremona transform}
In this section we exhibit two interesting line configurations which deliver very negative linear Harbourne constants, namely Klein and Wiman configurations of lines and we will see that performing the standard Cremona transform for those line configurations allows to construct two interesting conic configurations. By \emph{the linear Harbourne constant} we mean the local Harbourne constant at $\mathcal{P}$ such that the infimum is taken over line configurations on the complex projective plane.
\subsection{Klein configuration}
In \cite{BdRHHLPSz} the authors consider the linear case of the Harbourne constants. It turns out that Klein configuration delivers the second most negative value of a linear Harborune constant, which is equal to $-3$. Klein configuration consists of $21$ lines and delivers $t_{3} = 28, t_{4}=21$. Now we are going to apply the standard Cremona transform to obtain a configuration of $21$ conics. Take three non-collinear points away from the singular locus of Klein configuration. After Cremona transform based on these three point we obtain Cremona-Klein configuration $\mathcal{K}$ of $21$ conics and the following collection of singular points
$$t_{3} = 28, \ t_{4} = 21, \ t_{21} = 3.$$
Then the conical local Harbourne constant at ${\rm Sing}(\mathcal{K})$ is equal to
$$H_{C}(\mathbb{P}^{2}; {\rm Sing}(\mathcal{K})) = \frac{ 4\cdot21^2 - 3\cdot21^2 - 28\cdot9 - 21\cdot16}{49+3} = -2.827.$$

\subsection{Wiman configuration}
Wiman configuration of lines delivers, according to the best of
our knowledge, the most negative linear Harbourne constant, which
is equal to $-3.36$. This configuration consists of $45$ lines and
delivers the following collection of singular points
$$t_{3}=120,\ t_{4}=45,\ t_{5}=36.$$
We mimic our argumentation from the previous subsection. Let us choose three non-collinear points away from singular locus. After application of the standard Cremona transform we obtain Cremona-Wiman configuration $\mathcal{W}$ of $45$ conics and the following singular points
$$t_{3} = 120,\ t_{4}=45,\ t_{5}=36,\ t_{45} = 3.$$
Then the conical local Harbourne constant at ${\rm Sing}(\mathcal{W})$ is equal to
$$H_{C}(\mathbb{P}^{2}; {\rm Sing}(\mathcal{W})) = \frac{ 4 \cdot 45^2 - 3\cdot45^2 - 120\cdot9 - 45\cdot16 - 36\cdot25}{201+3} = -3.31.$$
\subsection{General remark}
At the end of this note let us point out a general remark due to X. Roulleau.
\begin{remark}
Let $\mathcal{C}$ be a configuration of curves on $\mathbb{P}^{2}_{\mathbb{C}}$ with $s$-singular points and denote by $H_{1}$ the local $H$-constant. Let us perform the standard Cremona transform on three general points. Denote by $\mathcal{C}'$ the curve configuration after the standard Cremona transform and denote by $H_{2}$ the local $H$-constant. Then we have
$$H_{2} = \frac{s}{s+3} H_{1}.$$
In particular, applying this construction $n\geq 1$ times to the elliptic curve configuration from \cite[Proposition~5]{Xavier} we obtain a configuration of degree $3\cdot2^{n}$ curves which asymptotically has the same local $H$-constant equal to $-4$.
\end{remark}

\section*{Acknowledgement}
The authors would like to express their gratitude to Xavier Roulleau for a proofreading of this manuscript and useful remarks and to Tomasz Szemberg for
stimulating conversations. The first author is partially supported by National Science Centre Poland Grant 2014/15/N/ST1/02102 and the second author is
supported by National Science Centre Poland Grant 2014/15/B/ST1/02197.



\bigskip
   Piotr Pokora,
   Instytut Matematyki,
   Pedagogical University of Cracow,
   Podchor\c a\.zych 2,
   PL-30-084 Krak\'ow, Poland.

\nopagebreak
   \textit{E-mail address:} \texttt{piotrpkr@gmail.com}

   \bigskip
   Halszka Tutaj-Gasi\'nska,
   Institute of Mathematics,
   Jagiellonian University,
   \L ojasiewicza 6,
   PL-30-348 Krak\'ow, Poland.

\nopagebreak
   \textit{E-mail address:} \texttt{Halszka.Tutaj@im.uj.edu.pl}


\end{document}